\definecolor{mygray}{gray}{0.6}
\newenvironment{myfont}[2][]{\csname#2\endcsname[#1]}{}
\newcommand{\stkout}[1]{\ifmmode\text{\sout{\ensuremath{#1}}}\else\sout{#1}\fi}
\newcommand{\bea}{\begin{eqnarray}}
\newcommand{\eea}{\end{eqnarray}}
\def\be{\begin{equation}}
\def\ee{\end{equation}}
\definecolor{red}{rgb}{1,0,0}
\definecolor{blue}{rgb}{0,0,1}
\definecolor{dblue}{rgb}{0,0,0.4}
\definecolor{green}{rgb}{0,1,0}
\definecolor{black}{rgb}{0,0,0}
\definecolor{white}{rgb}{1,1,1}
\definecolor{brn}{rgb}{.8,.4,.0}
\definecolor{redo}{rgb}{1,.5,.0}
\definecolor{ddgrn}{rgb}{0,0.4,0}
\definecolor{dgrn}{rgb}{0,0.55,0}
\definecolor{dbl}{rgb}{0,0,0.5}
\newcommand{\C}{\mathbb{C}}
\newcommand{\R}{\mathbb{R}}
\newcommand{\bpm}{\begin{pmatrix}}
\newcommand{\epm}{\end{pmatrix}}
\newcommand{\bmm}{\begin{matrix}}
\newcommand{\emm}{\end{matrix}}
\def\R{{\mathbb{R}}}
\def\C{{\mathbb{C}}}
\def \H{\mathbb{H}}
\newcommand {\emptycomment}[1]{}
\def\B{\mathrm{B}}
\newcommand{\nn}{{\nonumber}}
\newcommand{\Sec}[1]{Sec.~\ref{#1}}
\newtheorem{theorem}{Theorem}[section]
\newtheorem{lemma}[theorem]{Lemma}
\newtheorem{definition}[theorem]{Definition}
\begin{document}

\title{The photography method: solving pentagon equation}

\author{Vassily Olegovich Manturov}

\address{Moscow Institute of Physics and Technology, Moscow 141700, Russia \\
Nosov Magnitogorsk State Technical University, Zhilyaev Laboratory of mechanics of gradient nanomaterials, 38 Lenin prospect, Magnitogorsk, 455000, Russian Federation \\
vomanturov@yandex.ru}

\author{Zheyan Wan}

\address{Yanqi Lake Beijing Institute of Mathematical Sciences and Applications, Beijing 101408, China \\
wanzheyan@bimsa.cn}

\maketitle

\begin{abstract}

In the present paper, we consider two applications of the pentagon equation. 
The first deals with actions of flips on edges of triangulations labelled by rational functions in some variables.
The second can be formulated as a system of linear equations with variables corresponding to triangles of a triangulation.
The general method says that if there is some general {\em data} (say, edge lengths or areas)
associated with {\em states} (say, triangulations)
and a general {\em data transformation rule}
(say, how lengths or areas are changed under flips)
then after returning to the initial state we recover the initial data.




%
%

\end{abstract}

Keywords: Braid groups, cluster algebras, photography method, pentagon equation

MSC 2020: 20F36, 13F60, 57K20, 57K31

\section{Introduction}


Many structures in mathematics are of extreme importance not because they just
solve a problem or provide nice invariants (which may also be the case) but because they
uncover close ties between various branches of mathematics. This allows
one to construct a ``universal translator'' between different languages different
mathematicians speak.


\begin{figure}[h]
\centering\includegraphics[width = 0.9\textwidth]{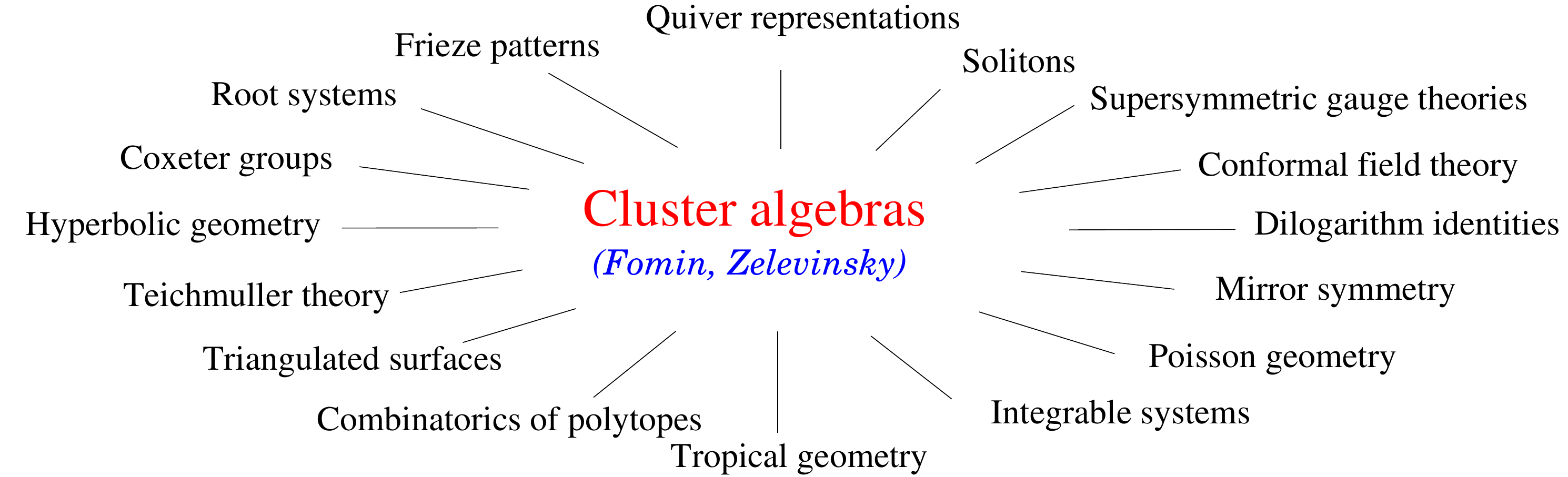}
\caption{Cluster algebras' connections and applications~\cite{Felikson}, see also \cite{FC,FZ1,FZ2,GKZ,KL,KR,Matveev,TV}.}\label{fig:cl_alg}
\end{figure}

Among such famous objects, we have 
cluster algebras
(see Fig.~\ref{fig:cl_alg}). We also mention Coxeter groups and Associahedra (Stasheff polytopes).
Among other branches of mathematics, the groups $\Gamma$ are related at
least to these three ones.

The groups $\Gamma_{n}^{k}$ were first introduced in
\cite{FMN} and intensively studied in
\cite{FKMN}.
In \cite{ManturovNikonovMay2023},
the groups $\Gamma_{n}^{k}$ 
motivated the authors to look for representations and actions of the braid group.
This leads to the {\em photography method}.
By the photography method (in a broad sense) we mean two very general methods for solving equations and
constructing invariants of objects.

For invariants, we consider objects (say, manifolds) represented by states (say, triangulations) modulo 
local moves (say, Pachner moves). 
With each state we associate some data (say, lengths, areas, angles, cocylces) and write down
some equations so that the set of solutions (maybe with some structure on it) before applying a move is in bijection with the set of solutions (with the same structure) after the move.
The equations are taken from some {\em natural data transmission rules} which are local,
so that the invariance evidently follows from geometrical reasons.

For equations, we take some {\em data} (say, edge-lengths) as variables, and we 
define {\em data transmission law} in such a way that the composition of {\em local}
moves corresponding to {\em flips between states} (say, Pachner moves)
leads to the initial data.
So, the {\em composition of data transmission operators} applied to {\em any data}
is identical. 
For further generalisations of the photography method, see \cite{KMNK}.

The Ptolemy transformation \eqref{eq:Ptolemy}
satisfies the pentagon equation,
i.e., the composition of five Ptolemy transformations shown in Fig.~\ref{flip_5-gon}
is the identity.
Namely, in order to show that {\em Ptolemy yields pentagon}, we need not calculate
anything (see~\cite{KM,FKMN,FMN,FKMN1}):
we just say that if two quadrilaterals sharing three vertices are both inscribed then
the whole pentagon is inscribed, and the {\bf data} concerning any of its quadrilaterals
can be restored from the data of the two quadrilaterals.

What do we mean by {\bf data}? In geometry, those can be edge lengths,
angles, dihedral angles, areas, volumes, etc.
But probably other {\bf data} can come from interesting number theory
and hence yield new invariants
of (at least) braids and 3-manifolds.

We expect that many representations of the groups $\Gamma$ and other groups (known or
still to be discovered) can be explained by using this method. Probably, many statements and relations in the cluster algebra theory
also follow from this method.

The general idea of the photography method belongs to the first named author,
as well as the {\bf possible strategy what to do in further directions} (see the last section).
Most of the calculations are due to the second named author.

The paper is structured as follows. In \Sec{sec:Ptolemy}, we give a solution to the pentagon equation using the Ptolemy identity and photography method. In \Sec{sec:area}, we give another solution to the pentagon equation using the areas and photography method. In \Sec{sec:further}, we give further directions on photography method.

\subsection{Acknowledgements}

On the way towards the present state of the art, the first named author did a lot of research
jointly with I.M. Nikonov; the paper and the book coauthored 
with Nikonov are just a tiny bit of it. The present paper could not be written without
Nikonov's collaboration.

The authors are extremely grateful to L.H. Kauffman for sharing their joy in working on this 
project, and to I.G. Korepanov who indicated to the first author the idea which led to 
the understanding of how much can be done with this ``photography method''. 
The authors are also grateful to Joshua Abraham, Zichang Han, and Seongjeong Kim for helpful discussion and comments.

The study was supported by the grant of Russian Science Foundation (No. 22-19-20073 dated March 25, 2022 ``Comprehensive study of the possibility of using self-locking structures to increase the rigidity of materials and structures''). 

\section{Two faces of the pentagon equation and solutions to them}\label{sec:Ptolemy}

The goal of the present section and the next section
is to give two forms of the pentagon equation
(as an action on lengths and as a matrix-form action on areas)
and give a solution to them.
Let 
\bea\label{eq:Ptolemy}
f(a,b,c,d,x)=\frac{ac+bd}{x}.
\eea
This is the well known Ptolemy formula for the other diagonal of the circumscribed quadrilateral with edge-lengths $a,b,c,d$
and one diagonal of length $x$. 

Now, we follow the discussion in \cite{Felikson}. 
Consider the hyperbolic plane $\H^2=\{z\in \C|Im(z)>0\}$ with metric $ds^2=\frac{dx^2+dy^2}{y^2}$ and curvature $-1$. In this model, geodesics are half-lines and half-circles perpendicular to the real axis. The map $f(z)=\frac{z-i}{z+i}$ transforms the upper half-plane model to the Poincar\'e unit disk model.

Let $A,B\in\partial \H^2$ be two points at the boundary of the hyperbolic plane. The hyperbolic distance between them is infinite, but the infinity is concentrated around $\partial \H^2$ and can be dealt with by using {\em horocycles} as follows (in the upper half-plane model of $\H^2$, a horocycle centered at $\infty$ is a horizontal line, horocycles centered at other points are circles tangent to $\partial H^2$). Choose horocycles $h_A$ and $h_B$ centered at $A$ and $B$ (see Fig.~\ref{horo} for the pictures in the upper half-plane and unit disk models). Let $l_{AB}$ be the signed distance between the horocycles $h_A$ and $h_B$ ($l_{AB}$ is zero if the horocycles are tangent and negative if the horocycles intersect each other). Denote $\lambda_{AB}=e^{l_{AB}/2}$, the {\em lambda length} of $AB$.

Given an {\em ideal} quadrilateral $ABCD$ in $\H^2$ (a quadrilateral with all vertices at $\partial \H^2$) and a choice of horocycles around each of the vertices, one can prove that the lambda lengths for $ABCD$ satisfy Ptolemy identity \cite{Penner}:
\bea\label{eq:hyperbolic-Ptolemy}
\lambda_{AB}\cdot\lambda_{CD}+\lambda_{BC}\cdot\lambda_{DA}=\lambda_{AC}\cdot\lambda_{BD}.
\eea

\begin{figure}[!h]
  \begin{center}
    \includegraphics[width=0.99\linewidth]{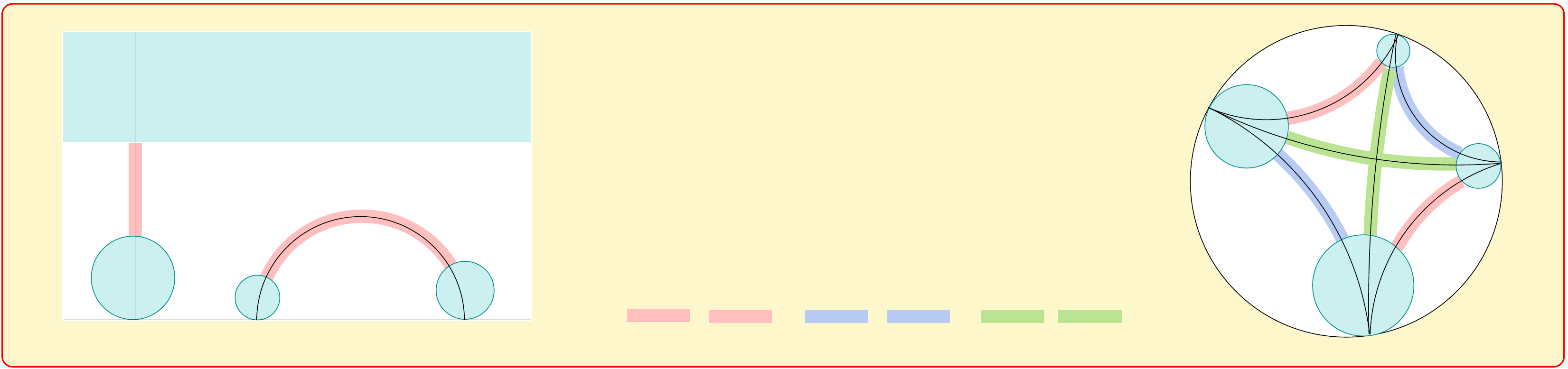}
    \put(-220,20){\tiny \color{blue}$\lambda_{AB}\cdot \lambda_{CD}+\lambda_{BC}\cdot \lambda_{DA}=\lambda_{AC}\cdot \lambda_{BD}$}
    \put(-200,50){{\color{blue} $\lambda_{AB}$}$=e^{l_{AB}/2}$}
    \put(-50,0){\small \color{ForestGreen} $A$}
    \put(-90,60){\small\color{ForestGreen} $B$}
    \put(-40,80){\small\color{ForestGreen} $C$}
    \put(-15,45){\small\color{ForestGreen} $D$}
    \put(-330,4){\small \color{ForestGreen} $A$}
    \put(-325,70){\small \color{ForestGreen} $B=\infty$}
    \put(-300,4){\small \color{ForestGreen} $C$}
    \put(-255,4){\small \color{ForestGreen} $D$}
    \put(-320,40){\small  $l_{AB}$}
    \put(-280,40){\small  $l_{CD}$}
    \caption{Lambda-lengths: removing infinity by horocycles} 
\label{horo}
\end{center}
\end{figure}

It is easy to show that given $a,b,c>0$, one can construct (in a unique way up to isometry) an ideal hyperbolic triangle with a choice of horocycles at its vertices such that $a=\lambda_{BC}$, $b=\lambda_{AC}$, and $c=\lambda_{AB}$. Also, triangles can be attached to each other along the edges with the same lambda lengths. Therefore, given a triangulated polygon with positive numbers assigned to its edges and diagonals in the triangulation, one can construct an ideal hyperbolic polygon with horocycles assigned to its vertices such that for every edge and diagonal the assigned number will coincide with the corresponding lambda length.
In particular, arbitrary positive numbers satisfying Ptolemy identity are realisable as the lambda lengths of the edges and diagonals of an ideal hyperbolic quadrilateral.

Consider two ideal hyperbolic triangles with lambda lengths of the edges $a,b,x$ and $c,d,x$
and glue them together along the edge $x$ to get an ideal hyperbolic quadrilateral with lambda lengths of the edges $a,b,c,d$ so that the edge of lambda length $b$ is adjacent to the edge of lambda length $c$.
By \eqref{eq:hyperbolic-Ptolemy}, $f(a,b,c,d,x)=\frac{ac+bd}{x}$ is the lambda length of the other diagonal of the quadrilateral.

Given a pentagon with vertices $A_{1},A_{2},A_{3},A_{4},A_{5}$.
Associate variables $a,b,c,d,e$ to the edges $A_{1}A_{2},\cdots, A_{5}A_{1}$.
Associate variables $x,y$ to the diagonals $A_{1}A_{3}$ and $A_{1}A_{4}$.

Start doing the following flips. First, we leave the diagonal $A_{1}A_{3}$ untouched
with label $x$; replace $A_{1}A_{4}$ with with $A_{3}A_{5}$ and put label
$z=f(x,c,d,e,y)$ on the new diagonal. 

We get two diagonals with labels on them; these labels are $x$ and some function on it.
Perform this operation five times (each time to a different quadrilateral) until we get back to the diagonals $A_{1}A_{3}$ and $A_{1}A_{4}$, see Fig.~\ref{flip_5-gon}.
At each step we get functions of $a,b,c,d,e,x,y$.

\begin{figure}[h]
\centering\includegraphics[width = 0.9\textwidth]{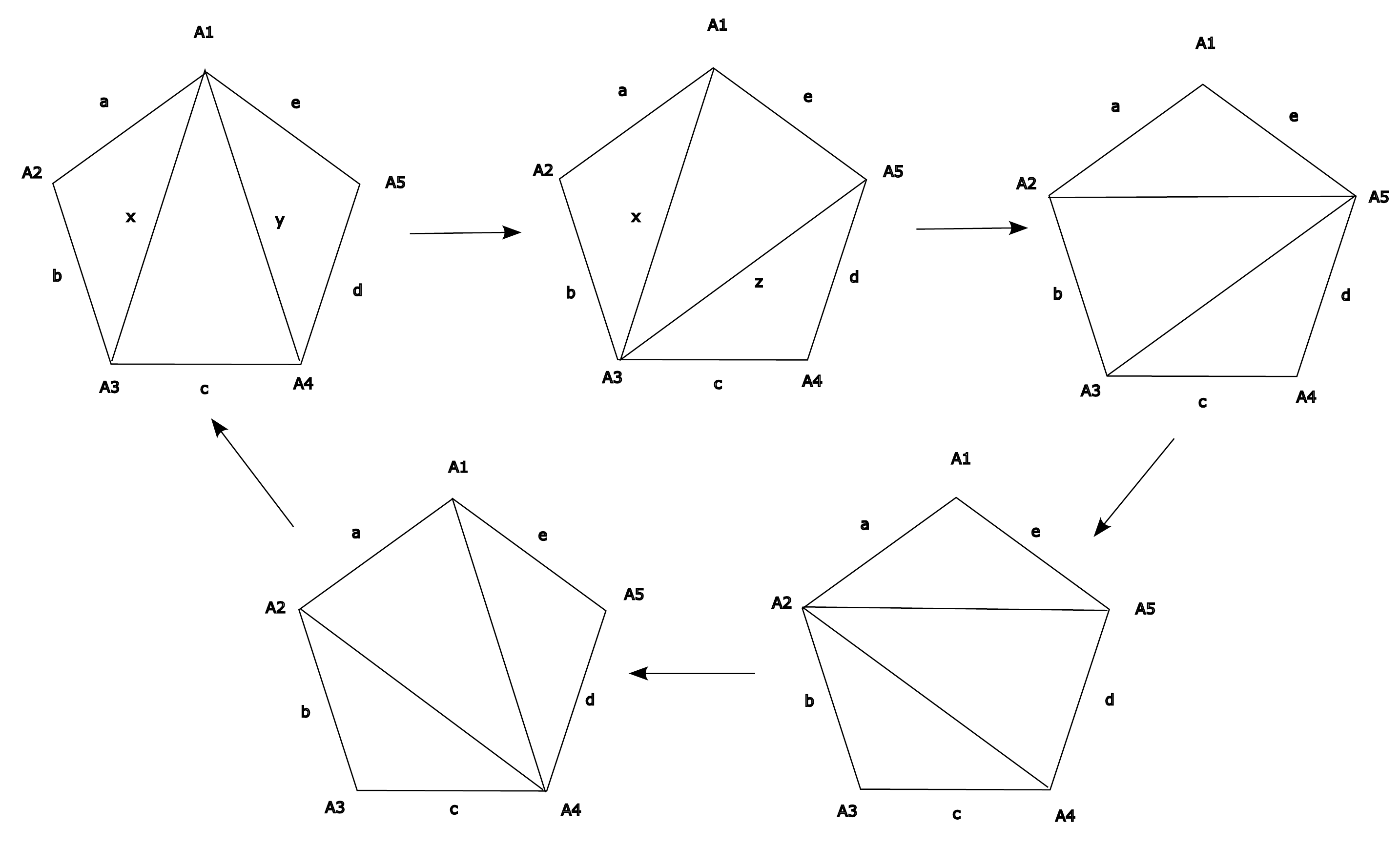}
\caption{Flip the diagonals of a pentagon five times.}\label{flip_5-gon}
\end{figure}

\begin{lemma}\label{lm1}
The new labels will be $x$ for $A_{1}A_{3}$ and $y$ for $A_{1}A_{4}$.
\end{lemma}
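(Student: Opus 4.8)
The plan is to use the photography method in its geometric form: interpret the seven labels as genuine lambda lengths of \emph{one fixed} ideal hyperbolic pentagon, so that each flip merely recomputes an intrinsic quantity. Given positive reals $a,b,c,d,e,x,y$, the realizability statement recalled above (arbitrary positive numbers on the edges and diagonals of a triangulated polygon are realizable) produces an ideal hyperbolic pentagon $A_1A_2A_3A_4A_5$ with horocycles, unique up to isometry, whose edge lambda lengths are $a,b,c,d,e$ and whose diagonals in the starting triangulation $\{A_1A_3,A_1A_4\}$ have lambda lengths $x,y$. In this fixed pentagon each of the five diagonals has a well-defined lambda length that does not depend on any choice of triangulation. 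Since $f$ in \eqref{eq:Ptolemy} is exactly the Ptolemy rule \eqref{eq:hyperbolic-Ptolemy}, the label produced at each flip is \emph{forced} to equal the true lambda length of the newly drawn diagonal.

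Next I would check that the five moves of Fig.~\ref{flip_5-gon} form a closed loop in the flip graph of the pentagon. The pentagon has exactly five triangulations, and its flip graph is again a pentagon (the $1$-skeleton of the associahedron/Stasheff polytope for a pentagon); the prescribed moves visit
\[
\begin{aligned}
\{A_1A_3,A_1A_4\}&\to\{A_1A_3,A_3A_5\}\to\{A_2A_5,A_3A_5\}\\
&\to\{A_2A_4,A_2A_5\}\to\{A_1A_4,A_2A_4\}\to\{A_1A_3,A_1A_4\},
\end{aligned}
\]
each step replacing a single diagonal by the opposite diagonal of the quadrilateral cut out by its two adjacent triangles. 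After the fifth flip we are back at the starting triangulation, so the two diagonals present are again $A_1A_3$ and $A_1A_4$; by the previous paragraph their labels must coincide with their intrinsic lambda lengths, namely $x$ and $y$. This proves the statement for all positive values of the variables.

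To upgrade this to the rational-function identity implicit in the lemma, I would observe that the composite of the five Ptolemy maps is a rational map in $a,b,c,d,e,x,y$ and that the positive orthant, on which we have just verified the claim, is Zariski dense in affine space; hence the two outputs agree as rational functions wherever defined. As an independent check (and to bypass realizability) one may substitute directly: writing $z_1=f(x,c,d,e,y)$, $z_2=f(a,b,z_1,e,x)$, and so on, the computation collapses at the third flip, where the numerator factors as $(ac+by)(dx+ce)$ and gives the clean value $z_3=(ac+by)/x$; this then forces $z_4=y$ and $z_5=x$.

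The main obstacle I anticipate is bookkeeping rather than depth: at each flip one must feed $f$ its arguments in the correct cyclic order, i.e.\ identify exactly which pair of edges the flipped diagonal separates in the current quadrilateral (the convention that $b$ is adjacent to $c$ in $f(a,b,c,d,x)$). A single misordered argument propagates through the chain and destroys the telescoping, so the orientations must be read off consistently from Fig.~\ref{flip_5-gon}. Once this is fixed, both the geometric and the algebraic routes go through; the only genuinely logical (rather than computational) point is the density step linking positive reals to rational-function identities, which is standard.
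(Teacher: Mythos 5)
Your proposal is correct and follows essentially the same route as the paper: realize the seven given quantities as the lambda lengths of a single ideal hyperbolic pentagon with horocycles (built by gluing three ideal triangles), observe that each flip's output is then forced to be the intrinsic lambda length of the new diagonal, and apply the Ptolemy identity five times to close the loop. The extra details you supply --- the explicit flip sequence, the Zariski-density step passing from positive reals to the rational-function identity, and the direct algebraic verification with the factorization $(ac+by)(dx+ce)$ at the third flip --- are all correct and merely make explicit what the paper leaves implicit.
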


\begin{proof}
We just use the following obvious fact. If seven lengths are given arbitrarily, then one can construct an ideal hyperbolic pentagon with a choice of horocycles at its vertices such that the lambda lengths of the five edges and two diagonals are exactly the given seven lengths. This can be done by attaching three triangles to each other along the edges with the same lambda lengths.
Then we apply the Ptolemy identity \eqref{eq:hyperbolic-Ptolemy} five times (each time to a different quadrilateral) and complete the proof.

%

\end{proof}

By the pentagon equation we mean that the composition of five transformation
shown in Fig.~\ref{flip_5-gon} 
is the identical transformation.
This equation can be written in various forms depending on what these operators act on.

From this, we can get solutions to the pentagon equation and actions of the braid group 
by using the function $f$. The function $f$ gives an action 
of the pure braid group \cite{FKMN1} on labelled triangulations where labels of edges remain rational functions.
The beauty of the above observation is that
{\bf for the action of the braid group, we need not know what the formula really is, we only need to have Lemma \ref{lm1}.}

The action of the pure braid group on labelled triangulations is an instance of the photography method. The data are the lengths and the data transmission rule is the Ptolemy identity. We start from the data and data transmission rule. They are natural because they correspond to some geometry. Finally we end up with an action of the pure braid group.

We can do other things with other data, say, with angles of hyperbolic triangles
(the sum of angles is not $\pi$) or with triangles on the round sphere. For example, one can take angles of triangles and sets of axioms coming from Euclidean geometry (opposite angles of an inscribed quadrilateral sum to $\pi$) or Hyperbolic geometry (sums of opposite angles of an inscribed quadrilateral are equal) 
to write down the data transmission rules coming from flips.




\section{Towards actions of the braid group inspired by $\Gamma_n^4$}\label{sec:area}

In this section, we mention a way to get an action of the braid group inspired by the group $\Gamma_{n}^{4}$ which is related
to a solution of the {\em pentagon equation}, which was communicated to the first named author
by I.G. Korepanov. After several conversations, the authors realized that this solution
can be interpreted as a partial case of the ``photography method''.
First, we recall the definition of the group $\Gamma_n^4$ given in \cite{FMN,FKMN}.

\begin{definition}\label{def-n-4}
The group $\Gamma_n^4$ is the group generated by
$$\{d_{(ijkl)}\mid \{i,j,k,l\}\subset\{1,\dots,n\},|\{i,j,k,l\}|=4\}$$ subject to the following relations:
\begin{enumerate}
\item
$d_{(ijkl)}^2=1$ for $\{i,j,k,l\}\subset\{1,\dots,n\}$,
\item
$d_{(ijkl)}d_{(stuv)}=d_{(stuv)}d_{(ijkl)}$ for $|\{i,j,k,l\}\cap\{s,t,u,v\}|<3$,
\item
$d_{(ijkl)}d_{(ijlm)}d_{(jklm)}d_{(ijkm)}d_{(iklm)}=1$ for distinct $i,j,k,l,m$,
\item
$d_{(ijkl)}=d_{(kjil)}=d_{(ilkj)}=d_{(klij)}=d_{(jkli)}=d_{(jilk)}=d_{(lkji)}=d_{(lijk)}$ for distinct $i,j,k,l$.
\end{enumerate}
\end{definition}
Here $d_{(ijkl)}$ corresponds to the flip of diagonals in the quadrilateral $ijkl$. The third relation is the pentagon relation.


We will associate a $2\times 2$ matrix to each generator of $\Gamma_n^4$. Below we construct the matrix for $d_{(1234)}$, the matrices for other $d_{(ijkl)}$ are constructed similarly.
The two diagonals divide a quadrilateral $1234$ into 4 parts, the areas of the 4 parts are denoted by $A,B,C,D$, see Fig.~\ref{flip}.
\begin{figure}[h]
\centering\includegraphics[width = 0.9\textwidth]{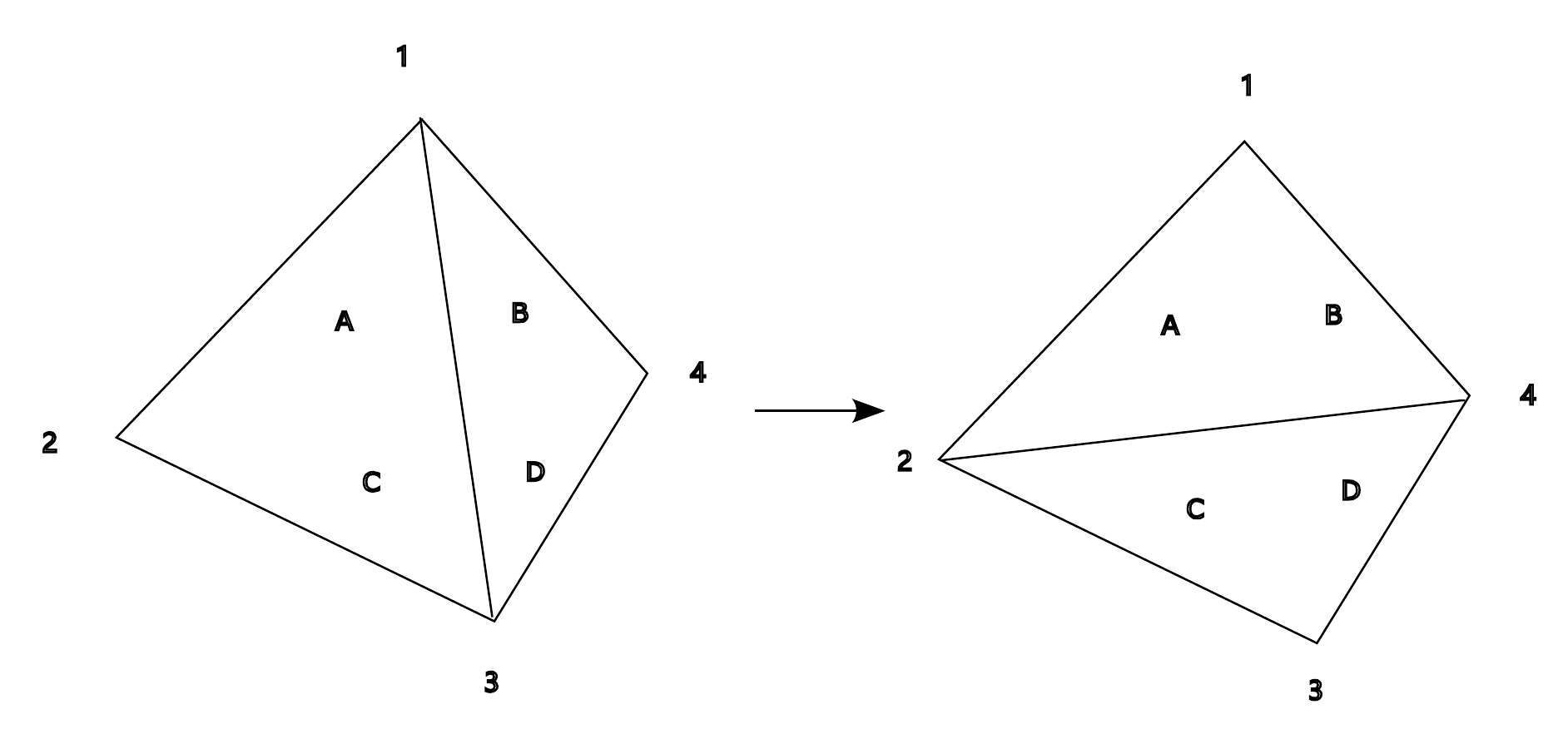}
\caption{Flip the diagonals of a quadrilateral.}\label{flip}
\end{figure}

Then the generator $d_{(1234)}$ flips the diagonal 13 to 24 of the quadrilateral 1234.
It maps $\left(\begin{array}{cc}A+C\\B+D\end{array}\right)$ to $\left(\begin{array}{cc}A+B\\C+D\end{array}\right)$. We try to associate a matrix $\left(\begin{array}{cc}a&b\\c&d\end{array}\right)$ to $d_{(1234)}$ such that
$$\left(\begin{array}{cc}a&b\\c&d\end{array}\right)\left(\begin{array}{cc}A+C\\B+D\end{array}\right)=\left(\begin{array}{cc}A+B\\C+D\end{array}\right).$$
Then 
$$
(1,1)\left(\begin{array}{cc}A+C\\B+D\end{array}\right)=(1,1)\left(\begin{array}{cc}A+B\\C+D\end{array}\right)
$$
for all $A,B,C,D$
implies that
$(a+c,b+d)=(1,1)$.

The inverse of the generator $d_{(1234)}$ flips the diagonal 24 to 13 of the quadrilateral 1234.
The matrix associated to $d_{(1234)}^{-1}$ is
$$\left(\begin{array}{cc}a&b\\c&d\end{array}\right)^{-1}=\left(\begin{array}{cc}a&b\\1-a&1-b\end{array}\right)^{-1}=\left(\begin{array}{cc}\frac{1-b}{a-b}&\frac{-b}{a-b}\\\frac{a-1}{a-b}&\frac{a}{a-b}\end{array}\right)=:\left(\begin{array}{cc}a'&b'\\c'&d'\end{array}\right).$$
We find that $(a'+c',b'+d')=(1,1)$.
Note that the matrices associated to $d_{(1234)}$ and $d_{(1234)}^{-1}$ are not equal in general.

Let $\zeta_i$ ($i=1,2,3,4$) be variables associated to the vertices of the quadrilateral, then we can construct the matrix of $d_{(1234)}$ explicitly as
\bea\label{eq:matrix}
\left(\begin{array}{cc}\frac{\zeta_3-\zeta_2}{\zeta_4-\zeta_2}&\frac{\zeta_1-\zeta_2}{\zeta_4-\zeta_2}\\\frac{\zeta_3-\zeta_4}{\zeta_2-\zeta_4}&\frac{\zeta_1-\zeta_4}{\zeta_2-\zeta_4}\end{array}\right).
\eea
Note that the sum of the elements in each column of this matrix is 1.

We verify that the matrices associated to the flips of diagonals satisfy the pentagon relation (see Fig.~\ref{flip_5-gon}).
Namely,
\bea
a_{1345}a_{1235}a_{2345}^{-1}a_{1245}^{-1}a_{1234}^{-1}=I_3
\eea
where 
$a_{ijkl}$ is the $3\times 3$ matrix associated to the flip of diagonals $d_{(ijkl)}$ in the quadrilateral $ijkl$ which fixes a triangle in the pentagon $12345$.
Note that each triangulation of the pentagon consists of three triangles, each flip of the diagonals changes two of the three triangles and fixes the other one. The $3\times 3$ matrix $a_{ijkl}$ is obtained from the $2\times2$ matrix constructed for $d_{(ijkl)}$ by adding a diagonal entry 1 corresponding to the fixed triangle.
By direct computation, we verify that
\bea
&&\left(\begin{array}{ccc}1&&\\&\frac{\zeta_4-\zeta_3}{\zeta_5-\zeta_3}&\frac{\zeta_1-\zeta_3}{\zeta_5-\zeta_3}\\&\frac{\zeta_4-\zeta_5}{\zeta_3-\zeta_5}&\frac{\zeta_1-\zeta_5}{\zeta_3-\zeta_5}\end{array}\right)\left(\begin{array}{ccc}\frac{\zeta_3-\zeta_2}{\zeta_5-\zeta_2}&\frac{\zeta_1-\zeta_2}{\zeta_5-\zeta_2}&\\\frac{\zeta_3-\zeta_5}{\zeta_2-\zeta_5}&\frac{\zeta_1-\zeta_5}{\zeta_2-\zeta_5}&\\&&1\end{array}\right)\left(\begin{array}{ccc}1&&\\&\frac{\zeta_5-\zeta_2}{\zeta_4-\zeta_2}&\frac{\zeta_3-\zeta_2}{\zeta_4-\zeta_2}\\&\frac{\zeta_4-\zeta_5}{\zeta_4-\zeta_2}&\frac{\zeta_4-\zeta_3}{\zeta_4-\zeta_2}\end{array}\right)\cdot\nn\\
&&\cdot\left(\begin{array}{ccc}\frac{\zeta_5-\zeta_1}{\zeta_4-\zeta_1}&&\frac{\zeta_2-\zeta_1}{\zeta_4-\zeta_1}\\&1&\\\frac{\zeta_4-\zeta_5}{\zeta_4-\zeta_1}&&\frac{\zeta_4-\zeta_2}{\zeta_4-\zeta_1}\end{array}\right)\left(\begin{array}{ccc}\frac{\zeta_4-\zeta_1}{\zeta_3-\zeta_1}&\frac{\zeta_2-\zeta_1}{\zeta_3-\zeta_1}&\\\frac{\zeta_3-\zeta_4}{\zeta_3-\zeta_1}&\frac{\zeta_3-\zeta_2}{\zeta_3-\zeta_1}&\\&&1\end{array}\right)=I_3.
\eea
Note that all generators of $\Gamma_n^4$ are involutions while these matrices are not.
So, formally, this is not a representation of $\Gamma_n^4$, though it does give rise to 
an action of the pure braid group on labelled triangulations (see \cite{FKMN}).

The Korepanov's solution is another instance of the photography method. The data are areas and the data transmission rule is \eqref{eq:matrix}. 
We start from the data and data transmission rule. They are natural because they correspond to some geometry. Finally we end up with an action of the pure braid group.

\section{Towards an algorithm: How it works and what to do further}\label{sec:further}

In this section we give a general strategy for how to solve various equations (like the pentagon) 
and give representations
of various groups (like $\Gamma$) by using {\em very mild data}.
We consider {\em states} of an object in question (e.g., triangulations of a manifold).
We consider {\em moves between states} (say, Pachner moves, flips).
We consider some {\em data} together with a {\em data transformation law} which
should be {\em natural} and {\em sufficient}.
 
For data we can take, say, edge lengths (or $\lambda$-lengths), by a natural data transformation
law we consider, say, the Ptolemy transformation. It is {\em natural} because it originates from
some geometry, and the data is {\em sufficient} because the edge-lengths ($\lambda$-lenghs)
of {\em any} triangulation (state) uniquely define the data of any other state. See \Sec{sec:Ptolemy}.

This leads to a formula (in our case, the Ptolemy formula) which can be treated as a
function (the way of rewriting lengths).
Geometric naturality guarantees that the function will satisfy certain equation
(in our case, Ptolemy yields pentagon).
 Hence, Lemma \ref{lm1}
can be checked by a brute force algorithm \cite{FKMN}.
 After such a purely algebraic proof, one can completely forget about what the initial objects
come from (lengths are natural numbers satisfying some mild conditions), and take abstract variables.

We also describe some possible caveats which may appear in constructing these solutions,
representations, and invariants, and give possible ways to overcome it. 
Also, we indicate possible applications of this method in various branches of mathematics.\\

1. Space with data.
Given a {\em homogeneous} space $S$ with {\bf data} which is invariant under the group of isometries.
{\bf Examples: Euclidean space, Lobachevsky space, Spheres, other homogeneous spaces.}\\

2. Nice family: for an integer $k$, on $S$ we have a family of 
``nice $k$-objects'' so that through each $k$  points there exists a unique nice $k$-object.
{\bf Examples:} $k=3$, Euclidean space, $2$-dimensional sphere, hyperbolic space.
{\bf Caveats:} For $\R^{2}$ we need not only circles, but also lines, for the hyperbolic plane
we also need horocycles and equidistants. Moreover (see further) sometimes 
we have to deal with coincident points; in this case we have to justify our methods.\\

3. A formula for data (existence). By putting together two $k$-tuples of points (
two vertices of a $(k-1)$-simplex, we get a bypiramid where the whole data can be
restored from the data for the two $k$-tuples.
{\bf Examples:} Two inscribed triangles give rise to an inscribed quadrilateral,
where the second diagonal is expressed in terms of the first.\\

4. With or without family. Sometimes, the photography method works
without any ``nice family''.\\

5. From formula to function. Functions are sometimes algebraic; variables can be abstract.
The pentagon equation deals with sides of a pentagon on the Euclidean space, hence, with non-negative real numbers.
However, the implication ``Ptolemy yields Pentagon'' for
the function $x\to \frac{ac+bd}{x}$ can be proved by purely algebraic methods.

Hence, we can use it {\bf not for numbers}, but rather, for {\bf variables} or
other {\bf abstract objects.} This led to the action of pure braid groups 
\cite{FKMN,FMN} on triangulations with edges marked by Laurent polynomials.

Similarly, the beautiful formula from section
 due to I.G. Korepanov \cite{K19} 
originates from {\bf areas of quadrilaterals}
\footnote{This {\bf solution} was communicated to the first named author by
I.G. Korepanov, but the {\bf method of obtaining it} was recognized by the
first author by himself.
}, however for $\zeta_{i}$ one can take variables $\zeta$ from some field. Similar problems were studied in \cite{KS99,K04,KS13,K14}.

{\bf In two words:} one takes a fact known from geometry (maybe school geometry
like the Ptolemy for an inscribed quadrilateral, 
obtains some {\bf corollaries} from it which can be proved  {\bf elementarily
from the geometrical point of view}, and then since the equations hold,
they should have an algebraic proof!\\

6. {\bf Summary of caveats and ways to overcome them.}
One of the main difficulties the authors of
\cite{FKMN} met in knot theory was an attempt to pass from
braid invariants (there are lot of them coming from $\Gamma_{n}^{4}$)
to knot invariants. The problem was division by zero: 
the solution to the equation $xy=ac+bd$ in $y$ is not unique once
$ac+bd=0$.

However, one can apply either: 
a brute force methods
(as I.M. Nikonov did in \cite{ManturovNikonovMay2023}, see also \cite{M22,MN}) when
one deals with some rings, stabilisations, equivalence types
or
by using some methods like blowing up in algebraic geometry
(there is more than one line passing through two coinciding
points, but we can make points distinct by some blow-up).
This is not realised yet. \\

7. {\bf Further directions.}
In Fig.~\ref{fig:cl_alg} one can see the ``Cluster algebra flower''. Lots of leaves
of it are already related to the groups $\Gamma$.
In \cite{ManturovNikonovMay2023} one gets
braids and manifolds. 

Among further directions which may appear in further papers, we mention just two:
In topology: passing from braids to knots, passing from 3-manifolds to 4-manifolds etc.
In group theory: deeper understanding the structure of the group $\Gamma$ themselves.

In this paper we work with the pentagon relation, hence, we do something with
3-manifolds. 
The reader can choose a subject according to his/her preferences and try to find
the photography method there!

\end{document}